\newtheorem{theo}{Theorem}
\newtheorem{pro}[theo]{Proposition}
\newtheorem{lem}[theo]{Lemma}
\newtheorem*{thmA}{Theorem A}
\newtheorem*{thmB}{Theorem B}
\newtheorem*{thmC}{Theorem C}
\renewcommand{\(}{\left(}
\renewcommand{\)}{\right)}
\renewcommand{\~}{\tilde}
\renewcommand{\-}{\overline}
\newcommand{\R}{\mathbb{R}}
\renewcommand{\S}{\mathbb{S}}
\renewcommand{\H}{\mathbb{H}}
\renewcommand{\d}{\delta}
\renewcommand{\k}{\kappa}
\renewcommand{\l}{\lambda}
\newcommand{\D}{\Delta}
\renewcommand{\t}{\theta}
\newcommand{\s}{\sigma}
\newcommand{\p}{\varphi}
\newcommand{\ra}{\rightarrow}
\begin{document}

\title[Willmore inequality on hypersurfaces in hyperbolic space]{Willmore inequality on hypersurfaces in hyperbolic space}
\author{Yingxiang Hu}
\address{Center of Mathematical Sciences \\ Zhejiang University \\ Hangzhou 310027 \\ China\\}
\email{huyingxiang10@163.com}
\date{}
\thanks{}
\begin{abstract}
In this article, we prove a geometric inequality for star-shaped and mean-convex hypersurfaces in hyperbolic space by inverse mean curvature flow. This inequality can be considered as a generalization of Willmore inequality for closed surface in hyperbolic $3$-space.
\end{abstract}

{\maketitle}
\section{Introduction}
The classical isoperimetric inequality and its generalization, the Alexandrov-Fenchel inequalities play an important role in different branches of geometry. Let $\Omega \subset \R^n$ be a smooth bounded domain with boundary $\Sigma$, then the classical isoperimetric inequality is
\begin{align}\label{1}
|\Sigma| \geq n^\frac{n-1}{n}\omega_{n-1}^{\frac{1}{n}}|\Omega|^\frac{n-1}{n},
\end{align}
and equality in (\ref{1}) holds if and only if $\Omega$ is a geodesic ball.

For $k\in\{1,\cdots,n-1\}$, we denote by $p_k$ the normalized $k$-th order mean curvature of $\Sigma$, and set $p_0=1$ by convention. The celebrated Alexandrov-Fenchel inequalities \cite{Alexandrov1937,Alexandrov1938,Fenchel1936} for convex hypersurface $\Sigma^{n-1} \subset \R^n$ are
\begin{align}\label{2}
\frac{1}{\omega_{n-1}}\int_{\Sigma}p_k d\mu \geq \(\omega_{n-1}\int_{\Sigma}p_j d\mu \)^{\frac{n-1-k}{n-1-j}}, \quad 0\leq j<k \leq n-1,
\end{align}
and equality in (\ref{2}) holds if and only if $\Omega$ is a geodesic ball.

Observe that the isoperimetric inequality holds for non-convex domains, it is natural to extend the original Alexandrov-Fenchel inequality to non-convex domains, see \cite{Gibbons1997,Trudinger1994,Guan-Li2009,Chang-Wang2011,Chang-Wang2013,Chang-Wang2014,Qiu2015}. We should also mention that the Willmore inequality, which is a weaker form of Alexandrov-Fenchel inequality, has been established for closed surfaces in $\R^3$. More precisely, for any closed surface $\Sigma \subset \R^3$, the Willmore inequality \cite{Chen1971,Li-Yau1982,Topping1998} is
\begin{align}\label{Willmore-Euclidean-2dim}
\int_{\Sigma}p_1^2d\mu \geq \omega_2,
\end{align}
and equality in (\ref{Willmore-Euclidean-2dim}) holds if and only if $\Sigma$ is a geodesic sphere.

It is interesting to establish the Alexandrov-Fenchel inequalities for hypersurfaces in hyperbolic space, see \cite{Borisenko-Miquel1999,Gallego-Solanes2005}. Recently, the following hyperbolic Alexandrov-Fenchel inequalities were obtained.
\begin{thmA}[\cite{Ge-Wang-Wu2013,Ge-Wang-Wu2014,Wang-Xia2014}]
Let $k\in \{1,\cdots,n-1\}$. Any horospherical convex hypersurface $\Sigma\subset\H^n$ satisfies
\begin{align}\label{3}
\int_{\Sigma}p_{k}d\mu\geq \omega_{n-1}\left[\(\frac{|\Sigma|}{\omega_{n-1}}\)^\frac{2}{k}+\(\frac{|\Sigma|}{\omega_{n-1}}\)^\frac{2(n-1-k)}{k(n-1)} \right]^\frac{k}{2}.
\end{align}
Equality in (\ref{3}) holds if and only if $\Sigma$ is a geodesic sphere.
\end{thmA}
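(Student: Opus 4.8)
The plan is to argue by a geometric flow, as in the cited works (a conformal transfer of the Euclidean inequality (\ref{2}) is not available, the right-hand side of (\ref{3}) having a genuinely hyperbolic two-term structure). Deform $\Sigma = \Sigma_0$ by inverse mean curvature flow in $\H^n$,
\[
\frac{\partial x}{\partial t} = \frac{n-1}{H}\,\nu ,
\]
with $H$ the mean curvature and $\nu$ the outer unit normal. By the long-time existence and convergence theory for this flow (Gerhardt and Urbas, extended to the hyperbolic setting), a star-shaped mean-convex $\Sigma$ stays star-shaped and mean-convex, the flow exists for all $t \geq 0$, the area obeys $|\Sigma_t| = |\Sigma|\,e^{(n-1)t}$, and $\Sigma_t$ is asymptotically round: its traceless second fundamental form decays exponentially and $\Sigma_t$ approaches the geodesic sphere of radius $r(t)$ with $\omega_{n-1}\sinh^{n-1}r(t) = |\Sigma_t| \to \infty$. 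The crucial extra ingredient is that \emph{horospherical convexity is preserved along the flow}: applying the tensor maximum principle to the evolution of $W - \mrm{id}$ (with $W$ the Weingarten operator) and using that the ambient sectional curvature equals $-1$, one finds that $\kappa_i \geq 1$ for all principal curvatures persists for every $t$. This is the only point at which the stronger hypothesis of Theorem~A --- rather than merely ``star-shaped and mean-convex'' --- is used.

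Next I would identify a monotone functional. A direct computation on a geodesic sphere of radius $r$ gives $|\Sigma| = \omega_{n-1}\sinh^{n-1}r$ and $\int_\Sigma p_k\,d\mu = \omega_{n-1}\sinh^{n-1-k}r\,\cosh^{k}r$, and these turn (\ref{3}) into an equality; equivalently, writing $A(t) = |\Sigma_t|/\omega_{n-1}$ and $\b = \tfrac{2(n-1-k)}{k(n-1)}$, the functional
\[
\mc Q(t) = \left(\frac{1}{\omega_{n-1}}\int_{\Sigma_t} p_k\, d\mu\right)^{2/k} - A(t)^{2/k} - A(t)^{\b}
\]
vanishes identically on geodesic spheres. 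Since $A(t) \to \infty$ while $\Sigma_t$ becomes round, $A(t)^{-\b}\mc Q(t) \to 0$ as $t \to \infty$, so it suffices to prove that $A(t)^{-\b}\mc Q(t)$ is nonincreasing in $t$. The basic input is the identity
\[
\frac{d}{dt}\int_{\Sigma_t} p_k\, d\mu = \int_{\Sigma_t}\frac{(n-1-k)\,p_{k+1} + k\,p_{k-1}}{p_1}\,d\mu ,
\]
which comes from the evolution equations of the elementary symmetric functions of $W$ once one observes that the second order term integrates to zero (the $(k-1)$-st Newton tensor being divergence free in a space form) and that the terms proportional to $p_k$ cancel. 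Inserting this into $\frac{d}{dt}(A^{-\b}\mc Q) \le 0$ and simplifying, the monotonicity reduces to the two estimates
\[
\int_{\Sigma_t}\frac{p_{k+1}}{p_1}\,d\mu \le \int_{\Sigma_t} p_k\, d\mu
\qquad\text{and}\qquad
\int_{\Sigma_t}\frac{p_{k-1}}{p_1}\,d\mu \le |\Sigma_t|^{2/k}\Bigl(\int_{\Sigma_t} p_k\, d\mu\Bigr)^{1-2/k},
\]
of which the first is immediate from the Newton--MacLaurin inequality $p_{k+1} \le p_1 p_k$ (and is vacuous when $k = n-1$).

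The second estimate is the heart of the matter and the step I expect to be the main obstacle: it is genuinely global --- a pointwise comparison of the integrands does not suffice (it already fails for $W = \mrm{diag}(1,2,2)$) --- so one must bring in the geometry of $\H^n$. Here I would use the Hsiung--Minkowski formulas for the static potential $\phi = \cosh r$, relating $\int_{\Sigma_t}\phi\, p_{k-1}\, d\mu$ to $\int_{\Sigma_t}\langle\overline{\nabla}\phi,\nu\rangle\, p_k\, d\mu$, together with horospherical convexity (nonnegativity of $\kappa_i - 1$), which is exactly what bounds the support-type function $u = \langle\overline{\nabla}\phi,\nu\rangle$ and renders the ensuing chain of H\"older and Newton--MacLaurin estimates sharp precisely on geodesic spheres. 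Granting the second estimate, $A(t)^{-\b}\mc Q(t)$ is nonincreasing with limit $0$, hence $\mc Q(0) \geq 0$, which is (\ref{3}). For the rigidity statement, equality in (\ref{3}) forces equality in every inequality used along the entire flow, in particular in $p_{k+1} = p_1 p_k$ at each time; hence $\Sigma_t$ is totally umbilic for every $t$, and $\Sigma$ is a geodesic sphere.
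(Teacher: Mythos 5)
First, a structural remark: the paper does not prove Theorem A at all --- it is quoted from \cite{Ge-Wang-Wu2013,Ge-Wang-Wu2014,Wang-Xia2014} as background --- so there is no internal proof to compare yours against; your attempt must stand on its own. The flow strategy and the algebra of your monotone quantity are set up correctly (the reduction of $\frac{d}{dt}(A^{-\b}\mc Q)\le 0$ to your two integral estimates does check out, and the first estimate follows pointwise from $p_1p_k\ge p_{k+1}$). But the argument has a genuine gap exactly where you flag it: the inequality $\int_{\Sigma_t}p_{k-1}p_1^{-1}\,d\mu\le|\Sigma_t|^{2/k}\bigl(\int_{\Sigma_t}p_k\,d\mu\bigr)^{1-2/k}$ is asserted, not proved, and it carries essentially the entire content of the theorem: it is an Alexandrov--Fenchel-type statement relating a curvature integral of order $k-2$ to one of order $k$, saturated exactly on geodesic spheres, and the Hsiung--Minkowski sketch you give does not close it. The cited proofs do not establish any single estimate of this form; Ge--Wang--Wu argue by induction on $k$ along the flow with speed $p_{k-1}/p_k$, and Wang--Xia use a constrained, quermassintegral-preserving flow, precisely so that the needed comparison between consecutive quermassintegrals is available from the inductive hypothesis rather than from an inequality like yours.

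Two further steps would not survive scrutiny as written. (a) Preservation of horospherical convexity under unconstrained inverse mean curvature flow in $\H^n$ is not a one-line tensor maximum principle: the reaction terms in the evolution of $W-\mathrm{id}$ under $\partial_t x=(n-1)H^{-1}\nu$ do not obviously satisfy the null-eigenvector condition, and the difficulty of preserving $h$-convexity along inverse curvature flows in hyperbolic space is one of the main reasons the literature resorts to modified or locally constrained flows; you cannot simply cite Gerhardt \cite{Gerhardt2011}, whose theorem preserves only star-shapedness and mean-convexity. (b) The claim that $A(t)^{-\b}\mc Q(t)\to 0$ ``since $\Sigma_t$ becomes round'' is exactly the trap this paper warns about: the subleading term of $\bigl(\frac{1}{\omega_{n-1}}\int_{\Sigma_t}p_k\,d\mu\bigr)^{2/k}-A(t)^{2/k}$ is itself of order $A(t)^{\b}$, so the exponential roundness estimate $|h_i^j-\d_i^j|\le Ce^{-t/(n-1)}$ only yields $A^{-\b}\mc Q=O(1)$, not $o(1)$. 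Identifying the sign of $\liminf_{t\to\infty}A^{-\b}\mc Q$ requires a precise graphical expansion and a sharp Sobolev inequality on $\S^{n-1}$, which is exactly the role played by Lemma \ref{Beckner} and Proposition \ref{lower-bound-estimate} in this paper for the analogous quantity $Q(t)$; your outline omits this entirely.
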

Inequality (\ref{3}) was proved in \cite{Ge-Wang-Wu2013} for $k=4$ and in \cite{Ge-Wang-Wu2014} for general even $k$. For $k=1$, (\ref{3}) was proved in \cite{Ge-Wang-Wu2014} with a help of a result of Cheng and Zhou \cite{Cheng-Zhou2012}. For general integer $k$, (\ref{3}) was proved in \cite{Wang-Xia2014}.

For $k=2$, inequality (\ref{3}) was proved by Li-Wei-Xiong \cite{Li-Wei-Xiong2014} under a weaker condition that $\Sigma$ is star-shaped and two-convex. More precisely,
\begin{thmB}[\cite{Li-Wei-Xiong2014}]
Any star-shaped and $2$-convex hypersurface $\Sigma\subset \H^n$$(n\geq 3)$ satisfies
\begin{align}\label{4}
\int_{\Sigma}p_2 d\mu \geq \omega_{n-1}^\frac{2}{n-1}|\Sigma|^\frac{n-3}{n-1}+|\Sigma|.
\end{align}
Equality in (\ref{4}) holds if and only if $\Sigma$ is a geodesic sphere.
\end{thmB}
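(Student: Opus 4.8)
The plan is to prove (\ref{4}) by running the inverse mean curvature flow (IMCF)
\[
\partial_t X=\frac{1}{H}\,\nu
\]
in $\H^n$ starting from $\Sigma$, and extracting a monotone quantity whose limiting value as $t\to\infty$ yields (\ref{4}). The analytic input is the regularity theory of this flow: for a star-shaped, $2$-convex initial hypersurface of $\H^n$, the flow exists for all $t\in[0,\infty)$, each leaf $\Sigma_t$ stays star-shaped and $2$-convex (so $H>0$), $|\Sigma_t|=|\Sigma_0|e^{t}$ (as $\partial_t\,d\mu=d\mu$), and — viewing $\Sigma_t$ as a radial graph over $\S^{n-1}$ — the leaves round up, becoming asymptotically umbilic and converging after rescaling to a geodesic sphere. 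I would invoke this Gerhardt-type a priori estimate package for inverse curvature flows in hyperbolic space; it is where $2$-convexity is used and it is the genuinely hard part.

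Granting it, set
\[
\mc{Q}(t):=\(\frac{|\Sigma_t|}{\omega_{n-1}}\)^{-\frac{n-3}{n-1}}\(\frac{1}{\omega_{n-1}}\int_{\Sigma_t}p_2\,d\mu-\frac{|\Sigma_t|}{\omega_{n-1}}\).
\]
On a geodesic sphere of radius $r$ (all principal curvatures equal to $\coth r$, $|\Sigma_t|=\omega_{n-1}\sinh^{n-1}r$) one checks $\mc{Q}\equiv1$, so the asymptotic roundness of the leaves gives $\mc{Q}(t)\to1$ as $t\to\infty$. Since (\ref{4}) is exactly the assertion $\mc{Q}(0)\ge1$, it suffices to show that $\mc{Q}$ is non-increasing along the flow.

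For the monotonicity I would compute the evolution of the curvature integrals. Writing $\sigma_k$ for the unnormalized $k$-th elementary symmetric function of the principal curvatures and combining the standard evolution equations for $h_i^j$ and $d\mu$, the divergence-freeness of the Newton tensors (valid since $\H^n$ is a space form), and the Gauss equation of $\H^n$, one gets along IMCF
\[
\frac{d}{dt}\int_{\Sigma_t}\sigma_k\,d\mu=\int_{\Sigma_t}\frac{(k+1)\,\sigma_{k+1}+(n-k)\,\sigma_{k-1}}{\sigma_1}\,d\mu.
\]
Specializing to $k=2$, using $\binom{n-1}{2}\int_{\Sigma_t}p_2\,d\mu=\int_{\Sigma_t}\sigma_2\,d\mu$ and $\frac{d}{dt}|\Sigma_t|=|\Sigma_t|$, and simplifying, the inequality $\mc{Q}'(t)\le0$ reduces to the pointwise estimate $\dfrac{3\sigma_3}{\sigma_1}\le\dfrac{n-3}{n-1}\,\sigma_2$, i.e. $p_3\le p_1p_2$. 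Because the leaves are $2$-convex, this follows from the Newton--MacLaurin inequalities $p_1p_3\le p_2^2$ and $p_2\le p_1^2$, with equality at a point forcing $p_2=p_1^2$, hence all principal curvatures equal there.

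Assembling the pieces, $\mc{Q}(0)\ge\lim_{t\to\infty}\mc{Q}(t)=1$, which is (\ref{4}). If equality holds in (\ref{4}) then $\mc{Q}\equiv1$, so $\mc{Q}'\equiv0$, so for $n\ge4$ (where $n-3>0$) every leaf $\Sigma_t$ is totally umbilic, and a closed totally umbilic hypersurface of $\H^n$ is a geodesic sphere; hence $\Sigma=\Sigma_0$ is one. The main obstacle is not the monotonicity computation, which is a routine bookkeeping exercise once $\mc{Q}$ is chosen, but the regularity package: long-time existence, preservation of star-shapedness and of $2$-convexity, and $C^\infty$ convergence of the rescaled leaves to a geodesic sphere, for a fully nonlinear parabolic equation in hyperbolic space.
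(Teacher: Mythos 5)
Your architecture --- IMCF, the normalized quantity $\mc{Q}$, monotonicity via the variational formula for $\int\sigma_k\,d\mu$ and the Newton--MacLaurin inequalities, then comparison with the asymptotic value --- is exactly the strategy of \cite{Li-Wei-Xiong2014} and of this paper's own Theorem \ref{main-theorem}, and your monotonicity step checks out: writing $E(t)=\int_{\Sigma_t}(p_2-1)\,d\mu$, your evolution formula gives $E'(t)=\tfrac{n-3}{n-1}\int_{\Sigma_t}\(p_3/p_1-1\)d\mu$, and $p_1p_3\le p_2^2\le p_1^2\,p_2$ (valid pointwise by $2$-convexity) yields $E'\le\tfrac{n-3}{n-1}E$, hence $\mc{Q}'\le0$.

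The genuine gap is the sentence ``the asymptotic roundness of the leaves gives $\mc{Q}(t)\to1$.'' Gerhardt's umbilicity estimate $|h_i^j-\d_i^j|\le Ce^{-t/(n-1)}$ (Theorem \ref{theo-Gerhardt}) only gives $\int_{\Sigma_t}(p_2-1)\,d\mu=O(e^{(n-2)t/(n-1)})$, while the normalizer is $|\Sigma_t|^{(n-3)/(n-1)}=O(e^{(n-3)t/(n-1)})$; the quotient is $O(e^{t/(n-1)})$, so the roundness estimate by itself does not even bound $\mc{Q}(t)$, let alone identify its limit. Worse, the claim is false in general: for IMCF in $\H^n$ the rescaled radial function $e^{-t/(n-1)}\sinh r(t,\cdot)$ converges to a positive function $f$ on $\S^{n-1}$ that need \emph{not} be constant (the flow does not converge, after rescaling, to a centered geodesic sphere; see \cite{Hung-Wang2015}), and the limit of $\mc{Q}(t)$ is a Sobolev quotient of $f$ that exceeds $1$ unless $f$ is constant. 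What is true, and all that the argument needs, is $\liminf_{t\ra\infty}\mc{Q}(t)\ge1$; but proving this requires the refined graphical expansion of $p_2$, $\l$ and $d\mu$ (keeping the relative $O(e^{-2t/(n-1)})$ corrections, as in the proof of Proposition \ref{lower-bound-estimate}) together with Beckner's sharp Sobolev inequality on $\S^{n-1}$ (Lemma \ref{Beckner}). This is precisely the step the paper flags as delicate (``the roundness estimate for $\Sigma_t$ is not strong enough to calculate the limit of $Q(t)$'') and it is the real analytic content of the proof; you have replaced it with an assertion that is both unjustified and, as stated, incorrect. A secondary point: your rigidity discussion covers only $n\ge4$; for $n=3$ the inequality degenerates via the Gauss equation and Gauss--Bonnet to $\chi(\Sigma)\ge2$, so the equality case requires a separate treatment there.
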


The Willmore inequality (\ref{Willmore-Euclidean-2dim}) has also been generalized to closed surface $\Sigma\subset \H^3$.
\begin{thmC}[\cite{Chen1974,Maeda1978,Ritore2005}] Any closed surface $\Sigma \subset \H^3$ satisfies
\begin{align}\label{willmore-3dim}
\int_{\Sigma} p_1^2 d\mu \geq \omega_2+|\Sigma|.
\end{align}
Equality in (\ref{willmore-3dim}) holds if and only if $\Sigma$ is a geodesic sphere.
\end{thmC}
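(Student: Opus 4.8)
The plan is to recognise the quantity $\int_\Sigma p_1^2\,d\mu-|\Sigma|$ as a \emph{conformally invariant} Willmore energy and then deduce (\ref{willmore-3dim}) directly from the Euclidean Willmore inequality (\ref{Willmore-Euclidean-2dim}). Since $p_1=H/2$ and $\int_\Sigma d\mu=|\Sigma|$, inequality (\ref{willmore-3dim}) is equivalent to $\mathcal{W}(\Sigma)\geq\omega_2$, where for a closed surface $\Sigma$ in a $3$-manifold one sets
\begin{align*}
\mathcal{W}(\Sigma):=\int_\Sigma\left(\frac{H^2}{4}+\bar K\right)d\mu=\int_\Sigma\frac12\,|A^{\circ}|^2\,d\mu+2\pi\chi(\Sigma),
\end{align*}
with $\bar K$ the ambient sectional curvature along $T_p\Sigma$ and $A^{\circ}$ the trace-free second fundamental form; the identity of the two expressions is the Gauss equation together with Gauss--Bonnet. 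In $\H^3$ one has $\bar K\equiv-1$, so $\mathcal{W}(\Sigma)=\int_\Sigma(p_1^2-1)\,d\mu$.

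The key step is the conformal invariance of $\mathcal{W}$. Under a conformal change $\bar g=e^{2\phi}g$ of the ambient metric one has $\bar\nu=e^{-\phi}\nu$, and from the transformation law of the Levi-Civita connection the second fundamental form picks up only a pure-trace correction; hence its trace-free part rescales simply as $\bar A^{\circ}=e^{\phi}A^{\circ}$. Since $d\bar\mu=e^{2\phi}d\mu$ on $\Sigma$, the density $|A^{\circ}|^2\,d\mu$ is a \emph{pointwise} conformal invariant, and since $\chi(\Sigma)$ is a topological invariant, $\mathcal{W}(\Sigma)$ is unchanged under conformal changes of the ambient metric for every closed $\Sigma$. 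I would then work in the Poincar\'e ball model, so that $g_{\H^3}=e^{2\phi}\,\delta$ on $\B^3\subset\R^3$ with $e^{\phi}=2/(1-|y|^2)$; a closed surface $\Sigma\subset\H^3$ lies in the open ball and is therefore a closed surface in flat $\R^3$ as well. Conformal invariance then yields
\begin{align*}
\int_\Sigma(p_1^2-1)\,d\mu_{\H^3}=\mathcal{W}_{\H^3}(\Sigma)=\mathcal{W}_{\R^3}(\Sigma)=\int_\Sigma\frac{H_{\mrm{eucl}}^2}{4}\,d\mu_{\mrm{eucl}},
\end{align*}
the last term being the Euclidean Willmore energy of $\Sigma$, since flat space has $\bar K\equiv0$. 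By (\ref{Willmore-Euclidean-2dim}) this is $\geq\omega_2$, which proves (\ref{willmore-3dim}).

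For the equality case, equality in (\ref{willmore-3dim}) forces equality in (\ref{Willmore-Euclidean-2dim}), hence $\Sigma$ is a round Euclidean sphere; being contained in the open ball $\B^3$, a M\"obius automorphism of $\B^3$ --- that is, a hyperbolic isometry --- carries its Euclidean centre to the origin and exhibits $\Sigma$ as a geodesic sphere of $\H^3$. Conversely a geodesic sphere of radius $\rho$ has $p_1=\coth\rho$ and area $\omega_2\sinh^2\rho$, so $\int_\Sigma(p_1^2-1)\,d\mu=\omega_2$ and equality holds.

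The steps above are mostly formal; the one place I expect to have to be careful is the bookkeeping of the conformal transformation laws and sign conventions that produce $\bar A^{\circ}=e^{\phi}A^{\circ}$ (needing $\bar\nu=e^{-\phi}\nu$ and the conformal change of the connection), and the verification that no non-spherical surface can enter the equality case --- handled by the observation that a round sphere lying strictly inside $\B^3$ is precisely a hyperbolic geodesic sphere. I would also note that this conformal argument uses no convexity and applies to every closed surface, whereas the inverse-mean-curvature-flow method employed elsewhere in the paper gives an alternative proof only under the stronger assumptions of star-shapedness and mean-convexity --- via long-time existence and asymptotic roundness of the flow together with the monotonicity of a Hawking-type mass along it.
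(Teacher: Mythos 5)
Your proof is correct, and it is worth saying up front that it does not compete with anything in the paper: Theorem C is quoted from \cite{Chen1974,Maeda1978,Ritore2005} as motivation and is never proved here, and your conformal-invariance argument is essentially the classical proof (it is in substance Chen's). The individual steps check out: with $\bar g=e^{2\phi}g$ and $\bar\nu=e^{-\phi}\nu$ one indeed gets $\bar h_{ij}=e^{\phi}\bigl(h_{ij}+(\partial_\nu\phi)g_{ij}\bigr)$, so the trace-free part transforms as $\bar A^{\circ}=e^{\phi}A^{\circ}$ and $|A^{\circ}|^2\,d\mu$ is pointwise conformally invariant \emph{because $\Sigma$ is two-dimensional} ($|\bar A^{\circ}|^2_{\bar g}=e^{-2\phi}|A^{\circ}|^2_g$ against $d\bar\mu=e^{2\phi}d\mu$); the Gauss equation plus Gauss--Bonnet identity for $\mathcal{W}$ and the evaluation $\int_{\Sigma}(p_1^2-1)\,d\mu=\omega_2$ on a geodesic sphere are both right. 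The only loose phrase is in the equality case: a M\"obius automorphism of $\B^3$ does not carry Euclidean centres to Euclidean centres, so you should instead normalize along the axis through the origin and the centre of the round sphere so that the two axial intersection points go to antipodal points, whence the image is a Euclidean sphere centred at the origin; this is the standard fact that Euclidean spheres contained in $\B^3$ are exactly the hyperbolic geodesic spheres, and it closes the argument. The useful comparison with the paper is structural: your method needs no hypothesis beyond closedness but is locked to surfaces, precisely because the pointwise conformal invariance of $|A^{\circ}|^2\,d\mu$ fails for hypersurfaces of dimension $n-1\geq 3$; the paper's inverse mean curvature flow proof of Theorem \ref{main-theorem} trades that generality for dimension, establishing the analogous inequality for all $n\geq 3$ but only for star-shaped, mean-convex hypersurfaces (and for $n=3$ it recovers only that restricted case of Theorem C). The two arguments are therefore complementary rather than alternative routes to the same statement.
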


Inspired by these previous results, we prove the following geometric inequality, which can be considered as a generalization of Willmore inequality for hypersurfaces in hyperbolic space.
\begin{theo}\label{main-theorem}
Let $\Sigma\subset \H^n$ $(n\geq 3)$ be star-shaped and mean-convex hypersurface, then
\begin{align}\label{5}
\int_{\Sigma}p_1^2 d\mu \geq \omega_{n-1}^\frac{2}{n-1}|\Sigma|^{\frac{n-3}{n-1}}+|\Sigma|.
\end{align}
Equality in (\ref{5}) holds if and only if $\Sigma$ is a geodesic sphere.
\end{theo}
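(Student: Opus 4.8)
\emph{Proof strategy (proposal).} The plan is to prove (\ref{5}) by the inverse mean curvature flow (IMCF), in the spirit of the arguments behind Theorems A, B and C. Let $\Sigma_0=\Sigma$ and evolve it by $\partial_t X=\frac1H\nu$, where $H=(n-1)p_1>0$ is the mean curvature and $\nu$ the outward unit normal. Since $\Sigma$ is star-shaped and mean-convex, the known long-time existence and convergence theory for inverse curvature flows in hyperbolic space (Gerhardt, Ding, and others) applies: the flow $\{\Sigma_t\}_{t\ge 0}$ exists for all $t$, each leaf $\Sigma_t$ stays star-shaped and mean-convex, and the leaves become asymptotically umbilic and round, i.e.\ they approach geodesic spheres of diverging radii. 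Along the flow the area form satisfies $\partial_t\, d\mu=d\mu$, so $|\Sigma_t|=e^t|\Sigma|$.

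The heart of the matter is a monotonicity formula. In $\H^n$ one has $\overline{\Ric}(\nu,\nu)=-(n-1)$, so under IMCF $\partial_t H=-\Delta(1/H)-(|A|^2-(n-1))/H$. Combining this with $\partial_t\,d\mu=d\mu$ and integrating by parts on the closed manifold $\Sigma_t$, I compute
\begin{align*}
\frac{d}{dt}\int_{\Sigma_t}H^2\,d\mu=\int_{\Sigma_t}\left(-2\frac{|\nabla H|^2}{H^2}-2|A|^2+2(n-1)+H^2\right)d\mu.
\end{align*}
Discarding the nonpositive gradient term and applying $|A|^2\ge H^2/(n-1)$ (Cauchy--Schwarz on the principal curvatures), then dividing by $(n-1)^2$, yields
\begin{align*}
\frac{d}{dt}\int_{\Sigma_t}p_1^2\,d\mu\le\frac{n-3}{n-1}\int_{\Sigma_t}p_1^2\,d\mu+\frac{2}{n-1}|\Sigma_t|.
\end{align*}
Set $Q(t):=|\Sigma_t|^{-\frac{n-3}{n-1}}\left(\int_{\Sigma_t}p_1^2\,d\mu-|\Sigma_t|\right)$. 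Since $\frac{d}{dt}|\Sigma_t|=|\Sigma_t|$, a direct computation shows that the displayed differential inequality is exactly equivalent to $Q'(t)\le 0$: the quantity $Q$ is non-increasing along the flow.

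To conclude, I evaluate $\lim_{t\to\infty}Q(t)$. On a geodesic sphere of radius $r$ one has $p_1=\coth r$ and $|\Sigma|=\omega_{n-1}\sinh^{n-1}r$, hence $\int p_1^2\,d\mu-|\Sigma|=\omega_{n-1}\sinh^{n-3}r$ and $Q=\omega_{n-1}^{2/(n-1)}$, independently of $r$. Using the asymptotic roundness of the flow one gets $\lim_{t\to\infty}Q(t)=\omega_{n-1}^{2/(n-1)}$, and therefore $Q(0)\ge\omega_{n-1}^{2/(n-1)}$, which is precisely (\ref{5}). (When $n=3$ the quantity is simply $Q(t)=\int_{\Sigma_t}p_1^2\,d\mu-|\Sigma_t|$ and the argument recovers Theorem C.) For equality: if equality holds in (\ref{5}) then $Q(0)$ equals its limit, so $Q\equiv\omega_{n-1}^{2/(n-1)}$ and $Q'(t)=0$ for all $t$; unwinding the two inequalities used above forces $\nabla H\equiv 0$ and $|A|^2\equiv H^2/(n-1)$, so each $\Sigma_t$ is totally umbilic, and a closed totally umbilic hypersurface in $\H^n$ is a geodesic sphere; in particular $\Sigma$ is a geodesic sphere. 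The converse is the computation just given.

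I expect the main obstacle to lie not in the monotonicity computation, which is robust, but in the analytic input: one must invoke — and, to be fully rigorous, make precise — the long-time existence and convergence of IMCF to geodesic spheres for merely star-shaped and mean-convex initial data in $\H^n$, and extract from it that the scaling quantity $Q(t)$ (not only the traceless second fundamental form) tends to the sphere value $\omega_{n-1}^{2/(n-1)}$. Keeping track of the correct normalization in $Q$ so that the boundary term at $t=\infty$ is exactly $\omega_{n-1}^{2/(n-1)}$ is the other point requiring care.
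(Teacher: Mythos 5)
Your overall strategy coincides with the paper's: run IMCF from $\Sigma$, show that the quantity $Q(t)=|\Sigma_t|^{-\frac{n-3}{n-1}}\int_{\Sigma_t}(p_1^2-1)\,d\mu$ (your $Q$ is the same one, just written with $\int p_1^2\,d\mu-|\Sigma_t|$) is non-increasing, and compare $Q(0)$ with its asymptotic value. Your monotonicity computation is correct and is exactly the paper's (the evolution equation you use, with $|A|^2-(n-1)$, is the right one; the paper's displayed equation (\ref{7}) carries a sign typo that its own proof silently corrects), and your treatment of the equality case is fine.

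The genuine gap is the step ``using the asymptotic roundness of the flow one gets $\lim_{t\to\infty}Q(t)=\omega_{n-1}^{2/(n-1)}$,'' which you flag as an obstacle but do not resolve --- and it cannot be resolved from roundness alone. Gerhardt's estimate gives $|h_i^j-\d_i^j|\le Ce^{-t/(n-1)}$, hence only $p_1^2-1=O(e^{-t/(n-1)})$ pointwise, while $|\Sigma_t|\sim e^t$; this yields $\int_{\Sigma_t}(p_1^2-1)\,d\mu=O(e^{\frac{(n-2)t}{n-1}})$ and therefore only $Q(t)=O(e^{\frac{t}{n-1}})$, which does not even show that $Q$ is bounded, let alone convergent, and gives no lower bound whatsoever, since the integrand could a priori be negative at that order. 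The paper's Proposition \ref{lower-bound-estimate} supplies the missing argument: write $\Sigma_t$ as a radial graph over $\S^{n-1}$, expand $p_1^2-1$ to second order in the decaying quantities $|\nabla\p|_{\S^{n-1}}$ and $\D_{\S^{n-1}}\p$, integrate by parts to reach
\begin{equation*}
\int_{\Sigma_t}(p_1^2-1)\,d\mu=\int_{\S^{n-1}}\Bigl(\l^{n-3}+\tfrac{n-3}{n-1}\l^{n-5}|\nabla\l|^2\Bigr)\,dvol_{\S^{n-1}}+O(e^{\frac{(n-5)t}{n-1}}),
\end{equation*}
and then apply Beckner's sharp Sobolev inequality on $\S^{n-1}$ (Lemma \ref{Beckner}) to $f=\l=\sinh r$. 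This produces only $\liminf_{t\to\infty}Q(t)\ge\omega_{n-1}^{2/(n-1)}$ rather than an exact limit, but that is all the monotonicity argument needs. Without this (or an equivalent sharp asymptotic lower bound), your proof is incomplete precisely at the point where the sharp constant $\omega_{n-1}^{2/(n-1)}$ enters.
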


We expect that the inequality (\ref{5}) will be useful in defining the Hawking mass for hypersurfaces in $\H^n$. In \cite{Hung-Wang2015}, the {\em Hawking mass} for a closed embedded surface $\Sigma$ in $\H^3$ is defined as
\begin{align*}
m_H(\Sigma)=\frac{|\Sigma|^\frac{1}{2}}{2\omega_2^\frac{1}{2}}\left[1-\omega_2^{-1}\int_{\Sigma}(p_1^2-1)d\mu\right].
\end{align*}

We now give the outline of the proof of Theorem \ref{main-theorem}. Motivated by \cite{Brendle-Hung-Wang2016,Lima-Girao2015,Li-Wei-Xiong2014}, we adopt the inverse mean curvature flow (IMCF) in our proof. This flow has been used by Huisken and Ilmanen \cite{Huisken-Ilmanen1997,Huisken-Ilmanen2001} to prove the Riemannian Penrose inequality in general relativity. We start from a given star-shaped and mean-convex hypersurface $\Sigma$, and evolve it by IMCF. By the convergence results of Gerhardt \cite{Gerhardt2011} (see also \cite{Brendle-Hung-Wang2016}), the IMCF exists for all time, and the evolving hypersurface $\Sigma_t$ with $\Sigma_0=\Sigma$ remains star-shaped and mean-convex for all $t\geq 0$.

We next consider the quantity:
\begin{align*}
Q(t):=|\Sigma_t|^{-\frac{n-3}{n-1}}\int_{\Sigma_t}(p_1^2-1) d\mu.
\end{align*}
We study the limit of $Q(t)$ as $t\ra \infty$. Notice that the roundness estimate for $\Sigma_t$ is not strong enough to calculate the limit of $Q(t)$. However, similar to \cite{Brendle-Hung-Wang2016,Li-Wei-Xiong2014}, we are able to give a positive lower bound for the limit of $Q(t)$, which will be used to establish the monotonicity of $Q(t)$. Finally, we prove that $Q(t)$ is monotone non-increasing under IMCF. From this, Theorem \ref{main-theorem} follows immediately.

\section{Preliminaries}
In this article, we consider the hyperbolic space $\H^n=\R^{+}\times \S^{n-1}$ equipped with the metric
$$
\-g=dr^2+\sinh^2 r g_{\S^{n-1}},
$$
where $g_{\S^{n-1}}$ is the standard round metric on the unit sphere $\S^{n-1}$. Let $\Sigma\subset \H^n$ be a closed hypersurface with its unit outward normal vector $\nu$. The second fundamental form $h$ of $\Sigma$ is defined by
\begin{align*}
h(X,Y)=\langle \-\nabla_X \nu,Y \rangle
\end{align*}
for any $X,Y\in T\Sigma$. The principal curvature $\k=(\k_1,\cdots,\k_n)$ are the eigenvalues of $h$ with respect to the induced metric $g$ on $\Sigma$. For $k\in \{1,\cdots,n-1\}$, the normalized $k$-th elementary symmetric polynomial of $\k$ is defined as
\begin{align*}
p_k(\k):=\frac{1}{\binom{n-1}{k}}\sum_{i_1<i_2<\cdots <i_k}\k_{i_1}\cdots \k_{i_k},
\end{align*}
which can also be viewed as a function of the second fundamental form $h_{i}^{j}=g^{jk}h_{ki}$. For abbreviation, we write $p_k$ for $p_k(\k)$.

We now consider the inverse mean curvature flow (IMCF)
\begin{align}\label{6}
\partial_t X=\frac{1}{(n-1)p_1}\nu.
\end{align}
where $\Sigma_t=X(t,\cdot)$ is a family of hypersurfaces in $\H^n$, $\nu$ is the unit outward normal to $\Sigma_t=X(t,\cdot)$. Let $d \mu_t$ be its area element on $\Sigma_t$. We list the following evolution equations.
\begin{lem}Under IMCF (\ref{6}), we have:
\begin{align}\label{7}
\partial_t p_1 =&-\frac{1}{(n-1)^2}\D\(\frac{1}{p_1}\)-\frac{1}{(n-1)^2p_1}(|A|^2+n-1).
\end{align}
\begin{align}\label{8}
\partial_t d\mu =&d\mu.
\end{align}
\end{lem}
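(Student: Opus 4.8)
The plan is to read off both identities from the general first-variation formulas for a hypersurface moving with a prescribed normal speed, and then to specialise to the inverse-mean-curvature speed $f=\frac{1}{(n-1)p_1}=\frac1H$ and to the constant curvature of $\H^n$. The fact that the flow is driven by $1/H$ enters only at the very end.

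First I would record the evolution formulas for an arbitrary flow $\partial_t X=f\nu$, with $\nu$ the outward unit normal and $h_{ij}=\langle\overline\nabla_{e_i}\nu,e_j\rangle$. Differentiating $e_i=\partial_iX$ in $t$ and splitting into tangential and normal parts gives $\partial_t e_i=(\nabla_if)\nu+fh_i^ke_k$, and then $\partial_t\nu=-\nabla f$, whence $\partial_t g_{ij}=2fh_{ij}$. Since $d\mu=\sqrt{\det g}\,dx$, the identity $\partial_t\log\sqrt{\det g}=\tfrac12 g^{ij}\partial_t g_{ij}=fH$ yields $\partial_t d\mu=fH\,d\mu$. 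Setting $f=1/H$ gives \eqref{8} at once; this part uses no curvature input.

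The substance is \eqref{7}. I would first establish the evolution of the second fundamental form for a general speed, $\partial_t h_{ij}=-\nabla_i\nabla_j f+f(h^2)_{ij}-f\overline R_{\nu i\nu j}$, where $(h^2)_{ij}=h_{ik}h^k_j$ and $\overline R$ is the ambient curvature tensor; the $\overline R$ term is exactly what appears when the ambient covariant derivatives of $\nu$ are commuted. Tracing with $g^{ij}$ and using $\partial_t g^{ij}=-2fh^{ij}$, the two reaction terms combine and collapse to $\partial_t H=-\Delta f-f|A|^2-f\,\overline{\Ric}(\nu,\nu)$, where $|A|^2=(h^2)_i^i=\sum\kappa_i^2$. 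For $\H^n$ the sectional curvature $-1$ gives $\overline R_{\nu i\nu j}=-g_{ij}$, hence $\overline{\Ric}(\nu,\nu)=-(n-1)$; this is the term that produces the $(n-1)$ adjustment of $|A|^2$ in the reaction term of \eqref{7}.

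It remains to pass from $H$ to $p_1=H/(n-1)$ and to put the diffusion term in the stated form. With $f=1/H$ the leading term is already $-\Delta(1/H)$, and the linearity of $\Delta$ together with $1/H=\frac1{n-1}\cdot\frac1{p_1}$ gives $\Delta(1/H)=\frac1{n-1}\Delta(1/p_1)$; combined with $\partial_t p_1=\frac1{n-1}\partial_t H$ this produces the prefactor $\frac{1}{(n-1)^2}$ in front of $\Delta(1/p_1)$ and the prefactor $\frac{1}{(n-1)^2p_1}$ in front of the reaction term, i.e. \eqref{7}. I expect the only delicate point to be the sign bookkeeping in the $\overline R$ term of the $\partial_t h_{ij}$ formula together with the constant-curvature substitution, since this is precisely what fixes the reaction term; I would nail it down by testing the final identity on a geodesic sphere of radius $r$, where $p_1=\coth r$, $|A|^2=(n-1)\coth^2r$, $\Delta(1/p_1)=0$ and the flow reduces to $\dot r=\tfrac{1}{n-1}\tanh r$, so that both sides of \eqref{7} can be computed by hand as an independent check.
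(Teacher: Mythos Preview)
The paper does not supply a proof of this lemma at all; the two evolution identities are simply quoted as standard facts (``We list the following evolution equations''). Your derivation via the general first-variation formulas $\partial_t d\mu=fH\,d\mu$ and $\partial_t H=-\Delta f-f\bigl(|A|^2+\overline{\Ric}(\nu,\nu)\bigr)$, specialised to $f=1/H$ and to the hyperbolic curvature, is exactly the standard computation and is correct.

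One point worth flagging: your own computation, carried through, actually produces
\[
\partial_t p_1=-\frac{1}{(n-1)^2}\Delta\Bigl(\frac{1}{p_1}\Bigr)-\frac{1}{(n-1)^2 p_1}\bigl(|A|^2-(n-1)\bigr),
\]
with a \emph{minus} sign in the reaction term, because $\overline{\Ric}(\nu,\nu)=-(n-1)$ in $\H^n$. The displayed formula in the lemma has ``$|A|^2+n-1$'', which is a typo: the paper itself uses the version with the minus sign when it applies the lemma in the proof of the monotonicity proposition, and the geodesic-sphere test you propose (where $p_1=\coth r$, $|A|^2=(n-1)\coth^2 r$, $\dot r=\frac{1}{n-1}\tanh r$) confirms that only the minus sign gives a consistent identity. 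So your plan is sound, and in executing it you would have discovered and corrected the misprint.
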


In \cite{Gerhardt2011}, Gerhardt investigated the inverse curvature flow of star-shaped hypersurfaces in hyperbolic space.
\begin{theo}[\cite{Gerhardt2011}]\label{theo-Gerhardt}
If the initial hypersurface is star-shaped and mean-convex, then the solution for IMCF (\ref{6}) exists for all time $t$ and preserves the condition of star-shapedness and mean-convexity. Moreover, the hypersurfaces become strictly convex exponentially fast and more and more totally umbilical in the sense of
\begin{align*}
|h_i^j-\d_i^j| \leq C e^{-\frac{t}{n-1}},\quad t>0,
\end{align*}
i.e., the principal curvatures are uniformly bounded and converge exponentially fast to one.
\end{theo}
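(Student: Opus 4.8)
The plan is to evolve $\Sigma$ by the inverse mean curvature flow (\ref{6}) and to monitor the scale-invariant quantity
\[
Q(t)=|\Sigma_t|^{-\frac{n-3}{n-1}}\int_{\Sigma_t}(p_1^2-1)\,d\mu .
\]
By Theorem \ref{theo-Gerhardt} the flow exists for all $t\ge 0$, remains star-shaped and mean-convex (so that $p_1>0$ along the flow), and $\Sigma_t$ becomes strictly convex with $\kappa_i\ra 1$ exponentially fast; moreover (\ref{8}) gives $|\Sigma_t|=|\Sigma_0|e^{t}$. Unravelling the definition, inequality (\ref{5}) for $\Sigma=\Sigma_0$ is precisely the assertion $Q(0)\ge \omega_{n-1}^{2/(n-1)}$. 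I would establish this by proving that (i) $Q$ is non-increasing, and (ii) $Q(t)\ge \omega_{n-1}^{2/(n-1)}$ for all large $t$.

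For the monotonicity I would differentiate $Q$, using $\partial_t d\mu=d\mu$ from (\ref{8}) to write $\frac{d}{dt}\int_{\Sigma_t}(p_1^2-1)\,d\mu=\int_{\Sigma_t}\big(2p_1\,\partial_t p_1+(p_1^2-1)\big)\,d\mu$, and then substitute (\ref{7}). Integrating the Laplacian term by parts produces the favourable contribution $-\tfrac{2}{(n-1)^2}\int_{\Sigma_t}|\nabla p_1|^2/p_1^2\,d\mu\le 0$, while the Newton--Maclaurin inequality $|A|^2\ge (n-1)p_1^2$ (equivalently $p_1^2\ge p_2$) controls the zeroth order terms. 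Collecting the contribution of the area weight $|\Sigma_t|^{-(n-3)/(n-1)}$, I expect to arrive at
\[
Q'(t)=\frac{2}{(n-1)^2}\,|\Sigma_t|^{-\frac{n-3}{n-1}}\int_{\Sigma_t}\left[-\frac{|\nabla p_1|^2}{p_1^2}-\big(|A|^2-(n-1)p_1^2\big)\right]d\mu\le 0 ,
\]
with equality at time $t$ if and only if $\Sigma_t$ is totally umbilic.

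The delicate point, and the main obstacle, is step (ii). The roundness estimate $|h_i^j-\delta_i^j|\le Ce^{-t/(n-1)}$ only yields $p_1^2-1=O(e^{-t/(n-1)})$, so the crude bound for $Q(t)$ obtained by multiplying by the area $O(e^{t})$ and the weight $O(e^{-t(n-3)/(n-1)})$ is $O(e^{t/(n-1)})$ and fails even to stay bounded; it therefore cannot locate the limit of $Q$. To bypass this I would exploit that, by Theorem \ref{theo-Gerhardt}, $\Sigma_t$ is strictly convex, hence star-shaped and $2$-convex, for all large $t$. Theorem B then applies to $\Sigma_t$ and, together with $p_1^2\ge p_2$, gives
\[
\int_{\Sigma_t}(p_1^2-1)\,d\mu\ge \int_{\Sigma_t}p_2\,d\mu-|\Sigma_t|\ge \omega_{n-1}^{2/(n-1)}\,|\Sigma_t|^{\frac{n-3}{n-1}} ,
\]
that is, $Q(t)\ge \omega_{n-1}^{2/(n-1)}$ for all large $t$. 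The conceptual content here is that the flow upgrades the weak hypothesis of mean-convexity to genuine convexity, where the stronger Theorem B becomes available.

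Combining the two steps yields $Q(0)\ge Q(t)\ge \omega_{n-1}^{2/(n-1)}$ for $t$ large, which is exactly (\ref{5}). For rigidity, equality in (\ref{5}) forces $Q(0)=\omega_{n-1}^{2/(n-1)}$; since $Q$ is non-increasing with $Q(0)=\omega_{n-1}^{2/(n-1)}$ and $Q(t)\ge\omega_{n-1}^{2/(n-1)}$ for large $t$, it is constant, so $Q'\equiv 0$ and the integrand above vanishes identically. Thus $\nabla p_1\equiv 0$ and $|A|^2=(n-1)p_1^2$, i.e. $\Sigma_t$ is totally umbilic for every $t$, hence a geodesic sphere; in particular $\Sigma=\Sigma_0$ is one. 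The converse is a direct computation: on a geodesic sphere of radius $r$ one has $\kappa_i=\coth r$ and $|\Sigma|=\omega_{n-1}\sinh^{n-1}r$, so both sides of (\ref{5}) equal $\omega_{n-1}\sinh^{n-3}r+\omega_{n-1}\sinh^{n-1}r$, completing the equality case.
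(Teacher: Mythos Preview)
Your proposal is not a proof of the stated result at all. The statement you were asked to prove is Gerhardt's long-time existence and convergence theorem for IMCF in hyperbolic space (Theorem~\ref{theo-Gerhardt}), which the paper merely cites from \cite{Gerhardt2011} and does not prove. Your write-up instead \emph{assumes} Theorem~\ref{theo-Gerhardt} and uses it to prove Theorem~\ref{main-theorem} (the Willmore-type inequality~(\ref{5})). So there is a basic mismatch between the target statement and what you have argued.

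That said, viewed as a proof of Theorem~\ref{main-theorem}, your argument is correct and partly diverges from the paper's. Your monotonicity step~(i) coincides with Proposition~\ref{pro-monotonicity}; in fact your identity
\[
Q'(t)=\frac{2}{(n-1)^2}\,|\Sigma_t|^{-\frac{n-3}{n-1}}\int_{\Sigma_t}\Big[-\frac{|\nabla p_1|^2}{p_1^2}-\big(|A|^2-(n-1)p_1^2\big)\Big]d\mu
\]
is exactly what the paper obtains after combining the evolution equations with $|A|^2\ge (n-1)p_1^2$, and it makes the sign and the equality case transparent without any appeal to positivity of $\int(p_1^2-1)\,d\mu$. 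The genuine difference is step~(ii): the paper establishes $\liminf_{t\to\infty}Q(t)\ge\omega_{n-1}^{2/(n-1)}$ by a detailed asymptotic expansion of $p_1^2-1$ and $d\mu$ for graphical hypersurfaces, reducing to Beckner's sharp Sobolev inequality on $\S^{n-1}$ (Lemma~\ref{Beckner}, Proposition~\ref{lower-bound-estimate}). You bypass that analysis entirely by observing that Gerhardt's theorem makes $\Sigma_t$ strictly convex (hence star-shaped and $2$-convex) for large $t$, so Theorem~B applies and, together with $p_1^2\ge p_2$, gives $Q(t)\ge\omega_{n-1}^{2/(n-1)}$ directly. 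This is a legitimate and shorter route, but it imports Theorem~B as a black box; since the proof of Theorem~B in \cite{Li-Wei-Xiong2014} itself runs IMCF and invokes Beckner's inequality, your argument is not truly independent of that machinery, whereas the paper's Section~3 is self-contained modulo Beckner.
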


\section{The asymptotic behavior of monotone quantity}
We define the quantity
\begin{align*}
Q(t):=|\Sigma_t|^{-\frac{n-3}{n-1}}\int_{\Sigma_t}(p_1^2-1) d\mu,
\end{align*}
where $|\Sigma_t|$ is the area of $\Sigma_t$. In this section, we estimate the lower bound of the limit of $Q(t)$.  First of all, we recall the following sharp Sobolev inequality on $\S^{n-1}$ due to Beckner \cite{Beckner1993}. This Sobolev inequality is crucial in analyzing the asymptotic behavior of the monotone quantity, see \cite{Brendle-Hung-Wang2016,Li-Wei-Xiong2014}, etc.
\begin{lem}\label{Beckner}For every positive function $f$ on $\S^{n-1}$, we have
\begin{equation}\label{Beckner-inequality-i}
\begin{split}
     &\int_{\S^{n-1}}f^{n-3}dvol_{\S^{n-1}}+\frac{n-3}{n-1}\int_{\S^{n-1}}f^{n-5}|\nabla f|^2 dvol_{\S^{n-1}} \\
\geq &\omega_{n-1}^{\frac{2}{n-1}}\(\int_{\S^{n-1}} f^{n-1}dvol_{\S^{n-1}}\)^\frac{n-3}{n-1}.
\end{split}
\end{equation}
Moreover, equality in (\ref{Beckner-inequality-i}) holds if and only if $f$ is a constant.
\end{lem}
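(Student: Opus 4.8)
The plan is to turn the weighted inequality \eqref{Beckner-inequality-i} into the sharp second-order (Yamabe) Sobolev inequality on the round sphere $\S^{n-1}$ and to read off the equality case from there. Set $m:=n-1$. The case $n=3$ is degenerate, since then both sides of \eqref{Beckner-inequality-i} reduce to $\omega_2$ for every $f$; so I treat $n\geq 4$ (hence $m\geq 3$). Because $f>0$, the substitution $g:=f^{\frac{n-3}{2}}$ is admissible, and it satisfies $g^2=f^{n-3}$, $|\nabla g|^2=\frac{(n-3)^2}{4}f^{n-5}|\nabla f|^2$, and $g^{p}=f^{n-1}$ with $p:=\frac{2(n-1)}{n-3}$. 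Substituting, the left-hand side of \eqref{Beckner-inequality-i} becomes $\int_{\S^{n-1}}g^2\,dvol+\frac{4}{(n-1)(n-3)}\int_{\S^{n-1}}|\nabla g|^2\,dvol$ and the right-hand side becomes $\omega_{n-1}^{2/(n-1)}\big(\int_{\S^{n-1}}g^{p}\,dvol\big)^{2/p}$. As $p=\frac{2m}{m-2}$ is exactly the critical Sobolev exponent on $\S^m$ and $\frac{4}{(n-1)(n-3)}=\frac{4}{m(m-2)}$, the assertion is equivalent to the sharp Sobolev inequality $\|g\|_{2}^2+\frac{4}{m(m-2)}\|\nabla g\|_{2}^2\geq \omega_m^{2/m}\|g\|_{p}^2$; multiplying by $m(m-1)$, this is precisely the Yamabe inequality for $L_{g_0}=-\frac{4(m-1)}{m-2}\Delta+m(m-1)$ on the round sphere, the constant $\omega_m^{2/m}$ corresponding to the Yamabe invariant $Y(\S^m)=m(m-1)\,\omega_m^{2/m}$ (evaluate at $g\equiv\mathrm{const}$ to fix the normalisation).

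To prove this sharp inequality I would transport it to Euclidean space by stereographic projection $\pi\cn\S^m\setminus\{N\}\to\R^m$, which is conformal with $\pi_*g_0=\rho^2 g_E$, $\rho=\frac{2}{1+|y|^2}$. Writing $\rho^2 g_E=\varphi^{4/(m-2)}g_E$ with $\varphi=\rho^{(m-2)/2}$ and using the conformal covariance of the conformal Laplacian, the curvature term is absorbed and the sphere energy equals the flat Dirichlet energy, $\int_{\S^m}\big(\frac{4(m-1)}{m-2}|\nabla g|^2+m(m-1)g^2\big)dvol=\frac{4(m-1)}{m-2}\int_{\R^m}|\nabla u|^2\,dy$, where $u=\varphi\,(g\circ\pi^{-1})$. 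Since $p=\frac{2m}{m-2}$ makes $|g|^{p}\,dvol$ conformally invariant, one also obtains $\int_{\S^m}|g|^{p}\,dvol=\int_{\R^m}|u|^{p}\,dy$. Hence the quotient on $\S^m$ equals the Euclidean Sobolev quotient $\frac{4(m-1)}{m-2}\|\nabla u\|_{L^2(\R^m)}^2/\|u\|_{L^{p}(\R^m)}^2$, and the inequality follows from the sharp Euclidean Sobolev inequality of Aubin and Talenti, whose optimal constant and extremal family $u(y)=c(\lambda^2+|y-y_0|^2)^{-(m-2)/2}$ are explicit; transporting the extremal through $\pi$ certifies that the sphere constant is exactly $\omega_{n-1}^{2/(n-1)}$.

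For equality I would invoke the rigidity of the Euclidean Sobolev inequality: equality forces $u$ to be an Aubin--Talenti bubble, so $g^{4/(m-2)}g_0$ has constant scalar curvature, and by Obata's theorem it is the pullback of $g_0$ by a conformal diffeomorphism of $\S^{n-1}$; that is, the extremal $g$ lies in the M\"obius orbit of the constants. The constant functions are the obvious members of this orbit and realise equality by the direct check in the first paragraph, which recovers the stated equality case $f\equiv\mathrm{const}$. I expect the equality analysis to be the main obstacle: at the critical exponent the optimisers form a noncompact conformal family rather than a single function, so one must either argue that the geometric application only uses the constant extremal (the round configuration) or impose the standard barycentre normalisation $\int_{\S^{n-1}}x\,g^{p}\,dvol=0$ to pin the extremal down to the constants. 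The remaining difficulty is purely computational: tracking the conformal factor $\rho$ through both the energy and the $L^{p}$ term to confirm that the sharp constant is $\omega_{n-1}^{2/(n-1)}$.
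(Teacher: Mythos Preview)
Your reduction is exactly the paper's: the authors also set $w=f^{(n-3)/2}$ and rewrite \eqref{Beckner-inequality-i} as the sharp $L^2$--$L^{2(n-1)/(n-3)}$ Sobolev inequality on $\S^{n-1}$. The only difference is that the paper then simply \emph{cites} Beckner's Theorem~4 for that inequality, whereas you outline a proof of it via stereographic projection, conformal covariance of $L_{g_0}$, and the Aubin--Talenti sharp Euclidean Sobolev inequality. Your route is correct and self-contained; the paper's is a one-line quotation. Either is fine here, since Beckner's inequality is being used as a black box in the asymptotic analysis of $Q(t)$.

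Your hesitation about the equality case is well placed, and in fact you have spotted a genuine imprecision in the lemma as stated. At the critical exponent the extremals of the sharp sphere Sobolev inequality are the whole M\"obius orbit of the constants, i.e.\ (up to positive multiples) the functions $w(x)=(a+b\cdot x)^{-(n-3)/2}$ with $a>|b|$; hence equality in \eqref{Beckner-inequality-i} holds for the corresponding non-constant $f$ as well, and the ``only if $f$ is constant'' clause is too strong. The paper's proof inherits the same slip when it asserts ``equality holds if and only if $w$ is a constant''. This does not damage the main theorem, however: the rigidity in Theorem~\ref{main-theorem} is obtained from the monotonicity computation in Proposition~\ref{pro-monotonicity} (the trace inequality $|A|^2\ge (n-1)p_1^2$ forces $\Sigma_t$ to be umbilical), not from the equality case of Lemma~\ref{Beckner}. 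So your instinct---that the geometric application only needs the inequality and the constant extremal, not the full classification of optimisers---is exactly right.
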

\begin{proof}
From Theorem 4 in \cite{Beckner1993}, for any positive smooth function $w$ on $\S^{n-1}$, we have the following inequality
\begin{equation}\label{Beckner-inequality-ii}
\begin{split}
     &\frac{4}{(n-1)(n-3)}\int_{\S^{n-1}}|\nabla w|^2 dvol_{\S^{n-1}}+\int_{\S^{n-1}} w^2 dvol_{\S^{n-1}} \\
\geq &\omega_{n-1}^{\frac{2}{n-1}}\(\int_{\S^{n-1}} w^{\frac{2(n-1)}{n-3}}dvol_{\S^{n-1}}\)^\frac{n-3}{n-1}.
\end{split}
\end{equation}
Moreover, the equality holds if and only if $w$ is a constant. For any positive function $f$ on $\S^{n-1}$, by letting $w=f^{\frac{n-3}{2}}$, we obtain the desired estimate.
\end{proof}

\begin{pro}\label{lower-bound-estimate}
Under IMCF (\ref{6}), we have
\begin{align}\label{19}
\liminf_{t\ra \infty} Q(t)\geq \omega_{n-1}^\frac{2}{n-1}.
\end{align}
\end{pro}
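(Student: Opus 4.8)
The plan is to write $\Sigma_t$ as a radial graph $r = \rho(t,\cdot)$ over $\S^{n-1}$, pass to the limit using Gerhardt's convergence result, and then apply Beckner's inequality (Lemma \ref{Beckner}) to the limiting graph function. First I would recall that a star-shaped hypersurface is $\Sigma_t = \{(\rho(t,\theta),\theta) : \theta \in \S^{n-1}\}$, and that under IMCF the area grows like $|\Sigma_t| = |\Sigma_0| e^t$, so that $|\Sigma_t|^{-\frac{n-3}{n-1}}$ decays at a definite exponential rate. Introduce the standard change of variable $\varphi = \varphi(t,\theta)$ defined by $\sinh \rho = \sinh(\text{something})$ — more precisely one uses a function like $\varphi$ with $\varphi' = \sinh^{-1}$ or works directly with $\lambda(\rho) = \cosh\rho$; the point is to find a quantity that converges in $C^1(\S^{n-1})$ as $t \to \infty$. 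By Theorem \ref{theo-Gerhardt} the rescaled hypersurfaces converge to a geodesic sphere exponentially, which at the level of the graph function means $\rho(t,\theta) - \bar\rho(t) \to 0$ exponentially for a suitable radial part $\bar\rho(t) \to \infty$; one extracts from this the convergence of the appropriately normalized graph function.

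The key computation is to express both $|\Sigma_t|$ and $\int_{\Sigma_t}(p_1^2-1)\,d\mu$ in terms of $\rho$ and $\nabla\rho$. The area element is $d\mu = \sinh^{n-1}\rho \sqrt{1 + \sinh^{-2}\rho\,|\nabla\rho|^2}\, dvol_{\S^{n-1}}$, and $p_1$ has the usual graph expression involving $\coth\rho$, the Hessian of $\rho$, and lower-order terms. The term $p_1^2 - 1$ is engineered so that, after substituting the graph formulas and using that as $t \to \infty$ the dominant behavior is $\rho \to \infty$ (so $\coth\rho \to 1$, $\sinh\rho \sim \frac12 e^\rho$), the leading-order contributions to $\int_{\Sigma_t}(p_1^2 - 1)\,d\mu$ and to $\omega_{n-1}^{2/(n-1)}|\Sigma_t|^{(n-3)/(n-1)}$ match exactly the two sides of Beckner's inequality (\ref{Beckner-inequality-i}) with $f$ taken to be the limiting normalized graph function (e.g.\ $f = \lim_t \frac12 e^{\rho(t,\cdot)} / (\text{normalization})$, which is finite by the convergence). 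Concretely, I expect the computation to show
\begin{align*}
\int_{\Sigma_t}(p_1^2-1)\,d\mu &= \int_{\S^{n-1}} f^{n-3}\,dvol_{\S^{n-1}} + \frac{n-3}{n-1}\int_{\S^{n-1}}f^{n-5}|\nabla f|^2\,dvol_{\S^{n-1}} + o(1),\\
|\Sigma_t| &= \int_{\S^{n-1}} f^{n-1}\,dvol_{\S^{n-1}} + o(1),
\end{align*}
after the correct rescaling, whence $\liminf_{t\to\infty}Q(t) \geq \omega_{n-1}^{2/(n-1)}$ is exactly Beckner's inequality.

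The main obstacle, as the introduction itself flags, is that Gerhardt's roundness estimate only gives $|h_i^j - \delta_i^j| \leq Ce^{-t/(n-1)}$, which controls $p_1$ to first order but is \emph{not} by itself strong enough to pin down the limit of $Q(t)$, because $Q$ involves the full integral of $p_1^2 - 1$ and the normalization $|\Sigma_t|^{-(n-3)/(n-1)}$ is delicately tuned against it. So the real work is in upgrading the convergence: I would need to show that the rescaled graph function converges in $C^1$ (not merely that principal curvatures converge), carefully track which error terms in the graph expansion of $p_1^2 - 1$ are genuinely $o(1)$ after multiplying by $|\Sigma_t|^{-(n-3)/(n-1)}$, and ensure no borderline term (scaling like a constant) is dropped. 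Handling the lower-order hyperbolic corrections — the difference between $\coth\rho$ and $1$, and the $+n-1$ type terms coming from the ambient curvature — requires keeping enough precision that their contribution either vanishes in the limit or is absorbed into the Beckner functional; this bookkeeping, together with justifying the $C^1$ convergence of $f$, is where the proof's weight lies. Once that is in place, invoking Lemma \ref{Beckner} for the limiting $f$ closes the argument.
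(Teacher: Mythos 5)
Your overall strategy --- represent $\Sigma_t$ as a radial graph, expand $p_1^2-1$ and the area element asymptotically, and recognize the two sides of Beckner's inequality --- is the same as the paper's. But the proposal has a genuine gap: the identity you ``expect the computation to show'' \emph{is} the proof, and you do not derive it. The whole point of the proposition is that the coefficients conspire exactly. Concretely, the paper sets $\lambda=\sinh r$ and introduces $\varphi$ with $d\varphi/dr=1/\lambda$; Gerhardt's estimates give $p_1^2-1=\lambda^{-2}-|\nabla\varphi|^2-\tfrac{2}{n-1}\lambda^{-1}\Delta_{\mathbb{S}^{n-1}}\varphi+O(e^{-4t/(n-1)})$. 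Multiplying by $\lambda^{n-1}$, integrating the Laplacian term by parts to get $\tfrac{2(n-2)}{n-1}\int\lambda^{n-3}\langle\nabla\lambda,\nabla\varphi\rangle$, and using $\nabla\lambda=\lambda\lambda'\nabla\varphi\approx\lambda^{2}\nabla\varphi$, this combines with the $-|\nabla\varphi|^{2}$ term via $\tfrac{2(n-2)}{n-1}-1=\tfrac{n-3}{n-1}$ to yield exactly $\tfrac{n-3}{n-1}\lambda^{n-5}|\nabla\lambda|^{2}$, i.e.\ the Beckner functional of $f=\lambda$. Without carrying out this cancellation (and checking that every discarded term is $O(e^{(n-5)t/(n-1)})$ against main terms of order $e^{(n-3)t/(n-1)}$), the argument is a plausible plan rather than a proof; you yourself flag that this bookkeeping ``is where the proof's weight lies'' and then omit it.

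A secondary point: your route through a limiting function $f=\lim_t(\text{rescaled graph})$ imposes an extra burden --- $C^1$ convergence of the rescaled radial function --- that you also leave unproved, and that the paper avoids entirely. Beckner's inequality is applied at each fixed time $t$ to the positive function $\lambda(t,\cdot)=\sinh r(t,\cdot)$ on $\mathbb{S}^{n-1}$; since the errors are of strictly lower exponential order than the main terms, dividing by $|\Sigma_t|^{(n-3)/(n-1)}$ kills them in the limit and the liminf bound follows with no limit extraction or compactness argument at all. (Minor slip: the relevant substitution is $\lambda=\sinh\rho$, not $\cosh\rho$.)
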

\begin{proof}
Recall that star-shaped hypersurfaces can be written as graphs of function $r=r(t,\t)$, $\t\in \S^{n-1}$. Denote $\l(r)=\sinh(r)$, then $\l'(r)=\cosh(r)$. We next define a function $\p(\t)=\Phi(r(\t))$, where $\Phi(r)$ is a positive function satisfying $\Phi'=\frac{1}{\l}$. Let $\t=\left\{\t^j\right\}$, $j=1,\cdots,n-1$ be a coordinate system on $\S^{n-1}$ and $\p_i,\p_{ij}$ be the covariant derivatives of $\p$ with respect to the metric $g_{\S^{n-1}}$. Define
$$
v=\sqrt{1+|\nabla \p|_{\S^{n-1}}^2}.
$$
From \cite{Gerhardt2011}, we know that
\begin{align}\label{20}
\l=O(e^\frac{t}{n-1}),\quad |\nabla \p|_{\S^{n-1}}+|\nabla^2 \p|_{\S^{n-1}}=O(e^{-\frac{t}{n-1}}).
\end{align}
Since $\l'=\sqrt{1+\l^2}$, we have
\begin{align}\label{21}
\l'=\l\(1+\frac{1}{2}\l^{-2}+O(e^{-\frac{4t}{n-1}})\).
\end{align}
From (\ref{20}), we also have
\begin{align}\label{22}
\frac{1}{v}=1-\frac{1}{2}|\nabla \p|_{\S^{n-1}}^2+O(e^{-\frac{4t}{n-1}}).
\end{align}
In terms of $\p$, we can express the metric and the second fundamental form of $\Sigma$ as follows,
\begin{align*}
g_{ij}=&\l^2(\s_{ij}+\p_i\p_j),\\
h_{ij}=&\frac{\l'}{v\l}g_{ij}-\frac{\l}{v}\p_{ij},
\end{align*}
where $\s_{ij}=g_{\S^{n-1}}(\partial_{\t^i},\partial_{\t^j})$. Denote $a_i=\sum_k \s^{ik}\p_{ki}$ and note that $\sum_i a_i=\D_{\S^{n-1}}\p$. By (\ref{20}), the principal curvatures of $\Sigma_t$ has the following form
$$
\k_i=\frac{\l'}{v\l}-\frac{a_i}{v\l}+O(e^{-\frac{4t}{n-1}}),\quad i=1,\cdots,n-1.
$$
Then we have
$$
p_1=\frac{\l'}{v\l}-\frac{\D_{\S^{n-1}}\p}{(n-1)v\l}+O(e^{-\frac{4t}{n-1}}).
$$
By using (\ref{21}) and (\ref{22}), we get
$$
p_1=1+\frac{1}{2\l^2}-\frac{|\nabla\p|_{\S^{n-1}}^{2}}{2}-\frac{\D_{\S^{n-1}}\p}{(n-1)\l}+O(e^{-\frac{4t}{n-1}}).
$$
and
\begin{align*}
p_1^2-1=\frac{1}{\l^2}-|\nabla\p|_{\S^{n-1}}^{2}-\frac{2}{n-1}\frac{\D_{\S^{n-1}}\p}{\l}+O(e^{-\frac{4t}{n-1}}).
\end{align*}
On the other hand,
$$
\sqrt{\det g}=\left[\l^{n-1}+O(e^\frac{(n-3)t}{n-1})\right]\sqrt{\det g_{\S^{n-1}}}.
$$
So we have
\begin{align*}
\int_{\Sigma_t}(p_1^2-1)d\mu=&\int_{\S^{n-1}}\l^{n-1}(p_1^2-1) dvol_{\S^{n-1}}+O(e^\frac{(n-5)t}{n-1}) \\
=&\int_{\S^{n-1}}\(\l^{n-3}-\l^{n-1}|\nabla \p|_{\S^{n-1}}^2\)dvol_{\S^{n-1}} \\
 &-\frac{2}{n-1}\int_{\Sigma}\l^{n-2}\D_{\S^{n-1}}\p dvol_{\S^{n-1}}+O(e^{\frac{(n-5)t}{n-1}}) \\
=&\int_{\S^{n-1}}\(\l^{n-3}-\l^{n-1}|\nabla \p|_{\S^{n-1}}^2\)dvol_{\S^{n-1}} \\
 &+\frac{2(n-2)}{n-1}\int_{\Sigma}\l^{n-3}\langle\nabla \l, \nabla \p\rangle_{\S^{n-1}} dvol_{\S^{n-1}}+O(e^{\frac{(n-5)t}{n-1}}).
\end{align*}
Since $\nabla \l=\l \l'\nabla \p$, it follows that $|\nabla \l-\l^2\nabla \p|_{g_{\S^{n-1}}}\leq O(e^{-\frac{t}{n-1}})$, we deduce that
\begin{equation}\label{23}
\int_{\Sigma_t}(p_1^2-1)d\mu=\int_{\S^{n-1}}\(\l^{n-3}+\frac{n-3}{n-1}\l^{n-5}|\nabla\l|^2\)dvol_{\S^{n-1}}+O(e^\frac{(n-5)t}{n-1}).
\end{equation}
Moreover,
$$
|\Sigma_t|^{\frac{n-3}{n-1}}=\(\int_{\S^{n-1}}\l^{n-1}dvol_{\S^{n-1}}\)^\frac{n-3}{n-1}+O(e^\frac{(n-5)t}{n-1}).
$$
Using Lemma \ref{Beckner}, we achieve
$$
\liminf_{t\ra \infty}|\Sigma_t|^{-\frac{n-3}{n-1}}\int_{\Sigma_t}(p_1^2-1)d\mu\geq \omega_{n-1}^\frac{2}{n-1}.
$$
\end{proof}

\section{Monotonicity}
In this section, we show that $Q(t)$ is monotone non-increasing under IMCF (\ref{6}).
\begin{pro}\label{pro-monotonicity}
Under IMCF (\ref{6}), the quantity $Q(t)$ is monotone non-increasing. Moreover, $\frac{d}{dt}Q(t)=0$ at some time $t$ if and only if $\Sigma_t$ is totally umbilical.
\end{pro}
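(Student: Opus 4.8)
The plan is to differentiate $Q(t)$ directly using the evolution equations and reduce the monotonicity to a single differential inequality on $\Sigma_t$. Write $Q(t)=|\Sigma_t|^{-\frac{n-3}{n-1}}\int_{\Sigma_t}(p_1^2-1)\,d\mu$. From \eqref{8} we have $\frac{d}{dt}|\Sigma_t|=|\Sigma_t|$, so the area factor contributes $-\frac{n-3}{n-1}Q(t)$. For the integral term, I would use \eqref{7} and \eqref{8} to compute
\begin{align*}
\frac{d}{dt}\int_{\Sigma_t}(p_1^2-1)\,d\mu
=\int_{\Sigma_t}\Big(2p_1\,\partial_t p_1+(p_1^2-1)\Big)\,d\mu,
\end{align*}
and then substitute the formula for $\partial_t p_1$. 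The Laplacian term $-\frac{2p_1}{(n-1)^2}\Delta(1/p_1)$ should be integrated by parts; since $\Delta(1/p_1)=-\frac{2|\nabla p_1|^2}{p_1^3}\cdot(-1)\cdots$ — more precisely $\nabla(1/p_1)=-p_1^{-2}\nabla p_1$ — integration by parts turns $\int 2p_1\,\Delta(1/p_1)$ into $-\int 2\nabla p_1\cdot\nabla(1/p_1)=\int 2p_1^{-2}|\nabla p_1|^2\geq 0$, giving a favorably-signed gradient term. Collecting everything, I expect to arrive at an identity of the shape
\begin{align*}
\frac{d}{dt}\int_{\Sigma_t}(p_1^2-1)\,d\mu
=-\frac{2}{(n-1)^2}\int_{\Sigma_t}\frac{|\nabla p_1|^2}{p_1^2}\,d\mu
-\frac{2}{(n-1)^2}\int_{\Sigma_t}\frac{|A|^2+n-1}{n-1}\,d\mu
+\int_{\Sigma_t}(p_1^2-1)\,d\mu,
\end{align*}
(signs and constants to be checked carefully). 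Using $|A|^2\geq (n-1)p_1^2$ with equality iff $\Sigma_t$ is totally umbilical, the middle term is bounded above by $-\frac{2}{(n-1)^2}\int_{\Sigma_t}(p_1^2+1)\,d\mu$, which combines with the last term to leave something like $-\frac{2}{(n-1)^2}\int_{\Sigma_t}(p_1^2+1)\,d\mu+\int_{\Sigma_t}(p_1^2-1)\,d\mu$; after also using $|\Sigma_t|^{-\frac{n-3}{n-1}}$ this must be shown to be dominated by $\frac{n-3}{n-1}Q(t)$.

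The next step is to close the argument: I would show $\frac{d}{dt}Q(t)\le 0$ by proving that the "bad" terms proportional to $\int(p_1^2-1)\,d\mu$ are exactly absorbed, up to the good negative terms, by the area-normalization. This is where Proposition \ref{lower-bound-estimate} enters. If the raw computation yields $\frac{d}{dt}Q(t)\le |\Sigma_t|^{-\frac{n-3}{n-1}}\big(c_1\int(p_1^2-1)-c_2\int(p_1^2+1)\big)-\frac{n-3}{n-1}Q$ with the constants not obviously of the right sign pointwise, I would instead integrate the differential inequality: establish $\frac{d}{dt}Q(t)\le F(Q(t))$ for some function $F$ vanishing at the conjectured limit, and combine with $\liminf_{t\to\infty}Q(t)\ge\omega_{n-1}^{2/(n-1)}$ to force monotonicity. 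Concretely, one expects an inequality of the form $\frac{d}{dt}Q\le -a\,Q + b$ with $b/a=\omega_{n-1}^{2/(n-1)}$, i.e. $Q(t)$ decays toward a value $\le$ its own liminf, which is only consistent with $Q$ being non-increasing and constant-at-infinity; alternatively the cleanest route is a direct pointwise inequality after carefully tracking that $\int_{\Sigma_t} 1\, d\mu=|\Sigma_t|$ and that Newton's inequality $|A|^2\ge (n-1)p_1^2$ gives the slack needed.

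For the rigidity statement, the equality $\frac{d}{dt}Q(t)=0$ at some $t$ forces each inequality used to be an equality there: the gradient term $\int p_1^{-2}|\nabla p_1|^2=0$ gives $p_1$ constant on $\Sigma_t$, and $|A|^2=(n-1)p_1^2$ gives all principal curvatures equal, i.e. $\Sigma_t$ is totally umbilical. Conversely, if $\Sigma_t$ is totally umbilical (a geodesic sphere, by Alexandrov-type rigidity in $\H^n$, or simply by the graph representation with $\varphi$ constant) then $p_1$ is the constant mean curvature $\coth r$, $|\nabla p_1|=0$, $|A|^2=(n-1)p_1^2$, and one checks directly from the identity above that $\frac{d}{dt}Q(t)=0$; the monotonicity-plus-liminf argument then pins $Q$ to the constant $\omega_{n-1}^{2/(n-1)}$.

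The main obstacle I anticipate is bookkeeping in the term combination: verifying that the coefficient of the sign-indefinite piece $\int_{\Sigma_t}(p_1^2-1)\,d\mu$ coming from $\partial_t d\mu$ is precisely cancelled, or over-cancelled, by the contribution $-\frac{n-3}{n-1}Q(t)$ from the normalizing power of the area together with the $-\frac{2}{(n-1)^2}(|A|^2+n-1)$ term. Unlike the case $k=2$ in Theorem B, here $p_1^2$ is not a divergence-type quantity, so there is no obvious "null Lagrangian" cancellation, and one must genuinely exploit Newton's inequality $|A|^2\ge (n-1)p_1^2$ pointwise; getting the constants to line up so that the residual integrand has a definite sign (or reduces to an ODE comparison) is the crux of the proof.
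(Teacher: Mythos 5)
Your plan is essentially the paper's proof, and the bookkeeping you flag as the crux does close exactly and pointwise in $t$ --- no ODE comparison and no input from Proposition \ref{lower-bound-estimate} is needed for the monotonicity itself. Two corrections to your computation. First, the ambient curvature term in the evolution of $p_1$ should be $|A|^2-(n-1)$, not $|A|^2+n-1$: since $\overline{\mathrm{Ric}}(\nu,\nu)=-(n-1)$ in $\H^n$, equation (\ref{7}) as printed carries a sign typo, and the proof must use the corrected version (your displayed middle term also has a spurious extra factor $\tfrac{1}{n-1}$). With the correct sign, integration by parts gives
\begin{align*}
\frac{d}{dt}\int_{\Sigma_t}(p_1^2-1)\,d\mu
=-\frac{2}{(n-1)^2}\int_{\Sigma_t}\Big[\frac{|\nabla p_1|^2}{p_1^2}+|A|^2-(n-1)\Big]d\mu
+\int_{\Sigma_t}(p_1^2-1)\,d\mu ,
\end{align*}
and Newton's inequality $|A|^2\ge (n-1)p_1^2$ yields $-\frac{2}{(n-1)^2}\big(|A|^2-(n-1)\big)\le -\frac{2}{n-1}(p_1^2-1)$, hence $\frac{d}{dt}\int_{\Sigma_t}(p_1^2-1)\,d\mu\le \frac{n-3}{n-1}\int_{\Sigma_t}(p_1^2-1)\,d\mu$. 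Since $\frac{d}{dt}|\Sigma_t|=|\Sigma_t|$, this is exactly $\frac{d}{dt}Q\le 0$; the coefficient of the sign-indefinite piece is cancelled on the nose by the area normalization, which is why the sign of $\int_{\Sigma_t}(p_1^2-1)\,d\mu$ is irrelevant here. A useful sanity check that the sign must be $-(n-1)$: with $+(n-1)$ you would still get $\frac{d}{dt}Q\le 0$ (with extra slack $-\frac{4}{n-1}|\Sigma_t|$), but then equality could never occur, contradicting the fact that geodesic spheres realize $\frac{d}{dt}Q=0$. Your rigidity argument (vanishing gradient term plus equality in the trace inequality forces umbilicity, and conversely) is the paper's.
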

\begin{proof}
Under IMCF (\ref{6}), we have
\begin{align*}
 &\frac{d}{dt}\int_{\Sigma_t}(p_1^2-1) d\mu \\
=&\int_{\Sigma_t}2p_1\partial_t p_1d\mu +\int_{\Sigma_t}(p_1^2-1)d\mu \\
=&-\frac{2}{(n-1)^2}\int_{\Sigma_t}p_1\left[\D\(\frac{1}{p_1}\)+\frac{1}{p_1}(|A|^2-(n-1))\right] d\mu+\int_{\Sigma_t}(p_1^2-1)d\mu\\
=&-\frac{2}{(n-1)^2}\int_{\Sigma_t}\left[\frac{1}{p_1^2}|\nabla p_1|^2+(|A|^2-(n-1))\right]d\mu+\int_{\Sigma_t}(p_1^2-1)d\mu\\
\leq &\frac{n-3}{n-1}\int_{\Sigma_t}(p_1^2-1)d\mu,
\end{align*}
where the last inequality follows from the trace inequality $|A|^2 \geq (n-1)p_1^2$. Combining with Proposition \ref{lower-bound-estimate}, we know that the quantity
$$
\int_{\Sigma_t}(p_1^2-1)d\mu
$$
is positive under IMCF (\ref{6}). Therefore, combining with (\ref{8}) we get
$$
\frac{d}{dt}Q(t)\leq 0.
$$
If the equality holds, then the trace inequality implies that $\Sigma_t$ is totally umbilical.
\end{proof}

Now we finish the proof of Theorem \ref{main-theorem}.
\begin{proof}[Proof of Theorem \ref{main-theorem}]
Since $Q(t)$ is monotone non-increasing, we have
\begin{align}
Q(0)\geq \liminf_{t\ra \infty} Q(t)\geq \omega_{n-1}^\frac{2}{n-1}.
\end{align}
This implies that $\Sigma_0=\Sigma$ satisfies
\begin{align*}
\int_{\Sigma}(p_1^2-1)d\mu \geq \omega_{n-1}^\frac{2}{n-1}|\Sigma|^\frac{n-3}{n-1},
\end{align*}
which is equivalent to
\begin{align*}
\int_{\Sigma}p_1^2 d\mu \geq \omega_{n-1}^\frac{2}{n-1}|\Sigma|^\frac{n-3}{n-1}+|\Sigma|.
\end{align*}
Now if we assume that equality in (\ref{5}) is attained, then $Q(t)$ is a constant. Then Proposition \ref{pro-monotonicity} indicates that $\Sigma_t$ is totally umbilical and therefore a geodesic sphere. If $\Sigma$ is a geodesic sphere of radius $r$, then $|\Sigma|=\omega_{n-1}\sinh^{n-1}r$ and $p_1=\coth r$.
Hence, we have
\begin{align*}
\int_{\Sigma}p_1^2d\mu = \omega_{n-1}\sinh^{n-1}r \coth^2 r=\omega_{n-1}^\frac{2}{n-1}|\Sigma|^\frac{n-3}{n-1}+|\Sigma|.
\end{align*}
Therefore, equality in (\ref{5}) holds on a geodesic sphere. This finishes the proof of Theorem \ref{main-theorem}.
\end{proof}


\begin{thebibliography}{10}
\bibitem{Alexandrov1937}{\sc A.D. Alexandrov}: Zur Theorie der gemischten Volumnia von konvexen K\"orpern, II: Neue Ungleichungen zwischen den gemischten Volumina und ihre Anwendungen, {\em Mat. Sb.} {\bf 44}(1937), 1205-1238.

\bibitem{Alexandrov1938}{\sc A.D. Alexandrov}: Zur Theorie der gemischten Volumnia von konvexen K\"orpern, III: Die Erweiterung zweeier Lehrsatze Minkowskis \"uber die konvexen Polyeder auf beliebige konvexe Flachen, {\em Mat. Sb.} {\bf 45}(1938), 27-46.

\bibitem{Beckner1993}{\sc W. Beckner}: Sharp Sobolev inequalities on the sphere and the Moser-Trudinger inequality, {\em Ann. of Math.} {\bf 138}(1993), 213-242.

\bibitem{Borisenko-Miquel1999}{\sc A.A. Borisenko, V. Miquel}: Total curvatures of convex hypersurfaces in hyperbolic space, {\em Illinois J. Math.} {\bf 43}(1999), 61-78.

\bibitem{Brendle2013}{\sc S. Brendle}: Constant mean curvature surfaces in warped product manifolds, {\em Publ. Math. Inst. Hautes \'Etudes Sci.} {\bf 117}(2013), 247-269.

\bibitem{Brendle-Hung-Wang2016}{\sc S. Brendle, P.-K. Hung, M.-T. Wang}: A Minkowski inequality for hypersurfaces in the Anti-de Sitter-Schwarzschild manifold, {\em Comm. Pure Appl. Math.} {\bf 69}(2016), 124-144.

\bibitem{Chang-Wang2011}{\sc S.-Y. A. Chang, Y. Wang}: On Aleksandrov-Fenchel inequalities for $k$-convex domains, {\em Milan J. Math.} {\bf 79}(2011), 13-38.

\bibitem{Chang-Wang2013}{\sc S.-Y. A. Chang, Y. Wang}: Inequalities for quermassintegrals on $k$-convex domains, {\em Adv. Math.} {\bf 248}(2013), 335¨C377.

\bibitem{Chang-Wang2014}{\sc S.Y. A Chang, Y. Wang}: Some higher order isoperimetric inequalities via the method of optimal transport, {\em Int. Math. Res. Not.} {\bf 24}(2014), 6619-6644.

\bibitem{Chen1971}{\sc B.Y. Chen}: On total curvature of immersed manifolds, I, {\em Amer. J. Math.} {\bf 93}(1971), 148-162.

\bibitem{Chen1974}{\sc B.Y. Chen}: Some conformal inariants of submanifolds and their applications, {\em Boll. Un. Math. Ital.}, {\bf 10}(1974), 380-385.


\bibitem{Cheng-Zhou2012}{\sc X. Cheng, D. Zhou}: Rigidity for nearly umbilical hypersurfaces in space forms, preprint, 2012, arXiv:1208.1786.

\bibitem{Lima-Girao2015}{\sc L. de Lima, F. Gir$\~{\textrm{a}}$o}: An Alexandrov-Fenchel-Type inequality in hyperbolic space with an application to a Penrose inequality,  Ann. Henri Poincar{\'e} {\bf 17}(2016), 979-1002.

\bibitem{Ding2010}{\sc Q. Ding}: The inverse mean curvature flow in rotationally symmetric spaces, {\em Chinese Annals of Mathematics}, Series B, 1-18 (2010).

\bibitem{Fenchel1936}{\sc W. Fenchel}: In\'egalit\'es quadratiques entre les olumes mixtes des corps convexes, {\em C. R. Acad. Sci., Paris} {\bf 203}(1936), 647-650.

\bibitem{Gallego-Solanes2005}{\sc E. Gallego, G. Solanes}: Integral geometry and geometric inequalities in hyperbolic space, {\em Differential Geom. Appl.} {\bf 22}(2005), 315-325.

\bibitem{Ge-Wang-Wu2013}{\sc Y.-X. Ge, G.-F. Wang, J. Wu}: Hyperbolic Alexandrov-Fenchel quermassintegral inequalities I, preprint, 2013, arXiv: 1303.1714.

\bibitem{Ge-Wang-Wu2014}{\sc Y.-X. Ge, G.-F. Wang, J. Wu}: Hyperbolic Alexandrov-Fenchel quermassintegral inequalities II, {\em J. Differential Geom.} {\bf 98}(2014), 237-260.

\bibitem{Gerhardt2011}{\sc C. Gerhardt}: Inverse curvature flows in hyperbolic space, {\em J. Differential Geom.} {\bf 89}(2011), 487-527.

\bibitem{Gibbons1997}{\sc G.M. Gibbons}: Collapsing shells and the isoperimetric inequality for black holes, {\em Classical Quantum Gravity} {\bf 14}(1997), 2905-2915.

\bibitem{Guan-Li2009}{\sc P. Guan, J. Li}: The quermassintegral inequalities for $k$-convex starshaped domains, {\em Adv. Math.} {\bf 221}(2009), 1725-1732.

\bibitem{Huisken-Ilmanen1997}{\sc Huisken G., Ilmanen T.}: The Riemannian Penrose inequality, {\em Int. Math. Res. Not.} {\bf 20}(1997), 1045-1058.

\bibitem{Huisken-Ilmanen2001}{\sc Huisken G., Ilmanen T.}: The inverse mean curvature flow and the Riemannian Penrose inequality, {\em J. Differential Geom.} {\bf 59}(2001), 353-437.

\bibitem{Hung-Wang2015}{\sc P.-K. Hung, M.-T. Wang}: Inverse mean curvature flows in the hyperbolic 3-space revisited, {\em Calc. Var. Partial Differential Equations} {\bf 54}(2015), 119-126.

\bibitem{Li-Wei-Xiong2014}{\sc H.-Z. Li, Y. Wei, C.-W. Xiong}: A geometric inequality on hypersurface in hyperbolic space, {\em Adv. Math.} {\bf 253}(2014), 152-162.

\bibitem{Li-Yau1982}{\sc P. Li, S.T. Yau}: A new conformal invariant and its applications to the Willmore conjecture and the first eigenvalue of compact surfaces, {\em Invent. Math.} {\bf 69}(1982), 269-291.

\bibitem{Maeda1978}{\sc M. Maeda}: The integral of the mean curvature, {\em Sci. Rep. Yokohama Nat. Univ.} {\bf 25}(1978), 17-21.

\bibitem{Qiu2015}{\sc G.H. Qiu}: A family of higher-order isoperimetric inequalities, {\em Commun. Contemp. Math.} {\bf 17}(2015), 1450015.

\bibitem{Ritore2005}{\sc M. Ritor\'e}: Optimal isoperimetric inequalities for three-dimensional Cartan-Hadamard manifolds, {\em Global theory of minimal surfaces}, Clay. Math. Proc., vol. 2, Amer. Math. Soc. Providence, RI, 2005, pp. 395-404.

\bibitem{Topping1998}{\sc P. Topping}: Mean curvature flow and geometric inequalities, {\em J. Reine Angew. Math.} {\bf 503}(1998), 47-61.

\bibitem{Trudinger1994}{\sc N. Trudinger}: Isoperimetric inequalities for quermassintegrals, {\em Ann. Inst. H. Poincar\`e Anal. Non Lin\`eaire} {\bf 11}(1994), 411-425.

\bibitem{Wang-Xia2014}{\sc G.-F. Wang, C. Xia}: Isoperimetric type problems and Alexandrov-Fenchel type inequalities in the hyperbolic space, {\em Adv. Math.} {\bf 259}(2014), 532-556.

\bibitem{Wang2015}{\sc K. Wang}: Singularities of mean curvature flow and isoperimetric inequalities in $\H^3$, {\em Proc. Amer. Math. Soc.} {\bf 142}(2015), 2651-2660.

\end{thebibliography}
\end{document}